\newtheorem{thm}{Theorem}[section]
\newtheorem{remark}[thm]{Remark}
\newtheorem*{proof}[thm]{Proof.}
\newcommand{\ds}{\displaystyle}
\newcommand{\mS}{\mathcal S}
\newcommand{\p}{\partial}
\journal{ }
\begin{document}

\begin{frontmatter}

\title{Approximate inversion for Abel integral operators of variable exponent and applications to fractional Cauchy problems}

\author[1]{Xiangcheng Zheng}
\ead{zhengxch@math.pku.edu.cn}

\address[1]{School of Mathematical Sciences, Peking University, Beijing 100871, China}

\begin{abstract}
We investigate the variable-exponent Abel integral equations and corresponding fractional Cauchy problems. The main contributions of the work are: (i) We develop an approximate inversion technique to convert variable-exponent Abel integral operators to feasible forms, based on which we analyze the corresponding integral and differential equations; (ii) We prove that the sensitive dependence of the well-posedness of classical Riemann-Liouville fractional differential equations on the initial value could be resolved by adjusting the initial value of the variable exponent; (iii) We prove that the singularity of the solutions to the Riemann-Liouville fractional differential equations could also be eliminated by adjusting the variable exponent and its derivatives at the initial time, which, together with (ii), demonstrates the advantages of introducing the variable exponent. The above findings suggest that the variable-exponent fractional problems may serve as a connection between integer-order and fractional models by adjusting the variable exponent at the initial time.
\end{abstract}

\begin{keyword}
Abel integral equation, variable-exponent, fractional Cauchy problem, approximate inversion.
\end{keyword}
\end{frontmatter}

\section{Introduction}
Abel integral equation of $0<\alpha<1$
$$\mathcal I_t^\alpha u(t):=\int_0^t\frac{u(s)}{(t-s)^\alpha}ds=f(t) $$
has a long history with widely applications and extensive investigations, see e.g., \cite{Gor,Hac} and the references therein. However, for the variable-exponent problems, in which the exponent $\alpha$ may be a function of $s$ or $t$, are rarely studied \cite{LorHar,Sam,Sam93} due to, e.g., the unavailability of the exact solutions. There are some recent progresses on the analysis and numerical approximations to the second kind Volterra integral equations (VIEs) with variable exponents and the variable-exponent time or space fractional models that could be reduced to second kind VIEs via variable substitutions or spectral decompositions \cite{LiaSty,WanJMAA,ZheSINUM}, but the variable-exponent Abel integral problems, which are indeed first kind VIEs with weak singularities, and the corresponding variable-exponent fractional Cauchy problems remain untreated in the literature to our best knowledge.

The critical difficulty of analyzing these problems lies in finding their inversions or equivalent second kind VIEs, which is not encountered in \cite{LiaSty,WanJMAA,ZheSINUM} as the models in these works are naturally second kind VIEs. In this paper we aim to provide a potential means to analyze the variable-exponent Abel integral equations and corresponding fractional differential equations involving the following variable-exponent Abel integral operators
\begin{equation}\label{AbelI}
 \mathcal I^{\alpha(t)}g(t):=\int_0^t\frac{g(s)}{(t-s)^{\alpha(s)}}ds,~~\hat{\mathcal I}^{\alpha(t)}g(t):=\int_0^t\frac{g(s)}{(t-s)^{\alpha(t)}}ds
 \end{equation}
or their variants. The $\alpha(t)$ lies in between $0$ and $1$ and we will specify its range in each section. As applications of introducing the variable exponent, we find that the sensitive dependence of the well-posedness of classical Riemann-Liouville fractional differential equations (R-L FDEs) on the initial value and the initial singularity of their solutions could be resolved by adjusting the initial value of the variable exponent. To be specific, the main contributions of this work are listed as follows:
\begin{itemize}
\item[$\bullet$] We find the approximate inversion operators of these variable-exponent Abel integral operators in the sense that they are reverted as the identity operator (or its multiple) added by a weak-singular integral operator, which takes the form of the second kind VIE. We base on this to analyze the well-posedness and smoothing properties of Abel integral equations and corresponding fractional Cauchy problems involving (\ref{AbelI}) or their variants.

\item[$\bullet$] It is known that the constant-exponent R-L fractional Cauchy problem of order $0<\alpha<1$ with the initial condition $u(0)=u_0$ is well-posed only in the case of $u_0=0$, which may be restrictive in real applications. Instead, in most literature, the fractional initial condition is proposed for this problem, which may be difficult to determine in real applications. Different from the existing results, we prove that the variable-exponent R-L FDEs of order $\alpha(t)$ are well-posed for any $u_0\in\mathbb R$ by setting $\alpha(0)=1$, which resolves the sensitive dependence of the well-posedness of classical R-L FDE on the initial value without using fractional initial conditions.

\item The derivative of the solutions to the classical R-L FDEs of order $0<\alpha<1$ with the zero initial condition is in general unbounded at the initial time. We prove that this singularity could be eliminated by setting $\alpha(0)=1$ and $(\p_t\alpha)(0)=0$ in the variable-exponent R-L FDEs of order $\alpha(t)$, which not only maintains the advantages of the fractional-order operators in characterizing the memory effects, but resolves the initial singularity of the solutions. 
\end{itemize}

Indeed, the conditions on the variable exponent proposed in these items suggest that by setting a integer limit of the variable exponent at the initial time, the properties of the FDEs tend to those of integer-order differential equations. Therefore, the variable-exponent fractional problems may serve as a connection between integer-order and fractional models by adjusting the variable exponent at the initial time.

The rest of the paper is organized as follows: In Section 2 we present approximate inversions of several variable-exponent Abel integral operators, based on which we analyze the well-posedness of variable-exponent Abel integral equations in Section 3. In Section 4, we prove the smoothing properties of their solutions. In Section 5, we analyze the variable-exponent R-L FDEs and discuss the dependence of their well-posedness on the initial values and the singularity issue of their solutions. We finally address conclusions and potential future works in Section 6.
\section{Approximate inversions of variable-exponent Abel integral operators}
We introduce approximate inversion operators of variable-exponent Abel integral operators  in the sense that it reverts an operator to the identity operator (or its multiple) added by an integral operator with a weak-singular kernel, which could be employed to analyze the corresponding integral and differential equations in the rest of the paper. Let $C^m[0,T]$ for $0\leq m\in\mathbb N$ be the space of $m$-th continuously differentiable functions on $[0,T]$, and $L^p(0,T)$ with $1\leq p\leq \infty$ be the space of $p$-th Lebesgue integrable functions on $(0,T)$. All these spaces are equipped with standard norms \cite{Ada}. We also use $C^m(0,T]$ to denote the space of functions that are $m$-th continuously differentiable on $[\varepsilon,T]$ for any $0<\varepsilon\ll 1$.
\subsection{Approximate inversions of $\mathcal I^{\alpha(t)}_t$ and $\hat{\mathcal I}^{\alpha(t)}_t$}
We consider these two operators with $0<\alpha(t)<1$ on $[0,T]$ and $\alpha\in C^1[0,T]$. Note that these conditions imply that $\alpha(t)$ lies in the interior of $[0,1]$ for any $t\in[0,T]$ with lower and upper bounds $0<\alpha_*\leq\alpha^*<1$.
\begin{thm}\label{thmII}
The operators
 $$\hat{\mathcal D}^{\alpha(t)}_t:=\p_t\,\mathcal {\hat I}^{1-\alpha(t)}_t\text{ and } \mathcal D^{\alpha(t)}_t:=\p_t\,\mathcal {I}^{1-\alpha(t)}_t$$ 
 serve as approximate inversion operators of $\mathcal I^{\alpha(t)}_t$ and $\hat{\mathcal I}^{\alpha(t)}_t$, respectively, in the sense that the following relations hold when both sides of the equations exist
\begin{align}\label{rela}
&\hat{\mathcal D}^{\alpha(t)}_t \mathcal I^{\alpha(t)}_tg(t)=\gamma(t)g(t)+\int_0^t\mathcal K(s,t)g(s)ds,\\
&\label{rela2} {\mathcal D}^{\alpha(t)}_t \hat{\mathcal I}^{\alpha(t)}_tg(t)=\gamma(t)g(t)+\int_0^t\mathcal L(s,t)g(s)ds,
\end{align}
where $\Gamma(\cdot)$ and $B(\cdot,\cdot)$ refer to standard Gamma and Beta functions, respectively, 
$$\gamma(t):=\Gamma(\alpha(t))\Gamma(1-\alpha(t))$$
 and the kernels $\mathcal K$ and $\mathcal L$ are defined as
\begin{align*}
&\mathcal K(s,t):=\p_t\big(B(\alpha(t),1-\alpha(s))(t-s)^{\alpha(t)-\alpha(s)}\big),\\
& \mathcal L(s,t):=\p_t\bigg(\int_0^1 \frac{1}{(1-z)^{1-\alpha((t-s)z+s)}z^{\alpha((t-s)z+s)}} dz \bigg),
\end{align*}
satisfying for $0\leq s\leq t\leq T$
\begin{equation}\label{bndK}
|\mathcal K(s,t)|\leq Q(1+|\ln(t-s)|),~~|\mathcal L(s,t)|\leq Q.
\end{equation}
Here $Q$ may depend on $\alpha_*$, $\alpha^*$, $\|\alpha\|_{C^1[0,T]}$ and $T$.
\end{thm}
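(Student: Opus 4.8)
The plan is to prove (\ref{rela}) and (\ref{rela2}) by explicit computation, assuming throughout (as in the statement) that $g$ is regular enough, say $g\in C[0,T]$, for both sides to be defined. For (\ref{rela}) I would first rewrite the composition $\hat{\mathcal I}^{1-\alpha(t)}_t\mathcal I^{\alpha(t)}_t g$ as a double integral over $\{0\le s\le\tau\le t\}$ and evaluate the inner $\tau$-integral via the substitution $\tau=s+(t-s)z$:
\begin{align*}
\int_s^t\frac{d\tau}{(t-\tau)^{1-\alpha(t)}(\tau-s)^{\alpha(s)}}=(t-s)^{\alpha(t)-\alpha(s)}\int_0^1\frac{dz}{(1-z)^{1-\alpha(t)}z^{\alpha(s)}}=B(\alpha(t),1-\alpha(s))(t-s)^{\alpha(t)-\alpha(s)}=:\widetilde{\mathcal K}(s,t).
\end{align*}
Since $\widetilde{\mathcal K}$ is bounded (shown below), Fubini's theorem applies and $\hat{\mathcal I}^{1-\alpha(t)}_t\mathcal I^{\alpha(t)}_tg(t)=\int_0^t\widetilde{\mathcal K}(s,t)g(s)\,ds$. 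For (\ref{rela2}) the same substitution is applied to $\mathcal I^{1-\alpha(t)}_t\hat{\mathcal I}^{\alpha(t)}_t g$; the key structural difference is that here both exponents become $\alpha((t-s)z+s)$ and therefore sum to $1$, so the powers of $t-s$ cancel completely, giving $\mathcal I^{1-\alpha(t)}_t\hat{\mathcal I}^{\alpha(t)}_tg(t)=\int_0^t\widetilde{\mathcal L}(s,t)g(s)\,ds$ with $\widetilde{\mathcal L}(s,t):=\int_0^1(1-z)^{\alpha((t-s)z+s)-1}z^{-\alpha((t-s)z+s)}\,dz$.

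Next I would differentiate in $t$. The Leibniz rule for a variable-limit integral gives
\begin{align*}
\p_t\int_0^t\widetilde{\mathcal K}(s,t)g(s)\,ds=\widetilde{\mathcal K}(t^-,t)\,g(t)+\int_0^t(\p_t\widetilde{\mathcal K})(s,t)g(s)\,ds,
\end{align*}
and likewise with $\widetilde{\mathcal L}$, where $\p_t\widetilde{\mathcal K}=\mathcal K$ and $\p_t\widetilde{\mathcal L}=\mathcal L$ by definition. The boundary contributions equal $\gamma(t)g(t)$: indeed $\widetilde{\mathcal K}(t^-,t)=\lim_{s\to t^-}B(\alpha(t),1-\alpha(s))(t-s)^{\alpha(t)-\alpha(s)}=B(\alpha(t),1-\alpha(t))=\Gamma(\alpha(t))\Gamma(1-\alpha(t))=\gamma(t)$, since $(t-s)^{\alpha(t)-\alpha(s)}=\exp((\alpha(t)-\alpha(s))\ln(t-s))\to1$ as $|\alpha(t)-\alpha(s)|\le\|\alpha\|_{C^1}(t-s)$ forces the exponent to $0$; and $\widetilde{\mathcal L}(t^-,t)=\int_0^1(1-z)^{\alpha(t)-1}z^{-\alpha(t)}\,dz=B(1-\alpha(t),\alpha(t))=\gamma(t)$ by dominated convergence, the integrand being controlled uniformly by $(1-z)^{-(1-\alpha_*)}z^{-\alpha^*}\in L^1(0,1)$. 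This yields (\ref{rela}) and (\ref{rela2}).

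It remains to establish (\ref{bndK}), which also provides the integrability/uniformity needed to justify the two differentiation steps above. For $\mathcal K$, the product and chain rules give
\begin{align*}
\mathcal K(s,t)=\alpha'(t)(\p_1 B)(\alpha(t),1-\alpha(s))(t-s)^{\alpha(t)-\alpha(s)}+B(\alpha(t),1-\alpha(s))(t-s)^{\alpha(t)-\alpha(s)}\Big(\alpha'(t)\ln(t-s)+\frac{\alpha(t)-\alpha(s)}{t-s}\Big),
\end{align*}
where $\p_1 B$ is the partial of $B$ in its first argument, and I would bound the factors one by one: $(t-s)^{\alpha(t)-\alpha(s)}$ lies between two positive constants because $|(\alpha(t)-\alpha(s))\ln(t-s)|\le\|\alpha\|_{C^1}\sup_{0<r\le T}r|\ln r|<\infty$; $B$ and $\p_1 B$ are continuous, hence bounded, on the compact set $[\alpha_*,\alpha^*]\times[1-\alpha^*,1-\alpha_*]$, to which all the arguments $(\alpha(t),1-\alpha(s))$ belong; $|\alpha'(t)|\le\|\alpha\|_{C^1}$ and $|\alpha(t)-\alpha(s)|/(t-s)\le\|\alpha\|_{C^1}$ by the mean value theorem; the only unbounded factor is $\ln(t-s)$, which produces the term $Q|\ln(t-s)|$. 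For $\mathcal L$, differentiating under the $z$-integral (note $\p_t\alpha((t-s)z+s)=z\,\alpha'((t-s)z+s)$) gives
\begin{align*}
\mathcal L(s,t)=\int_0^1\alpha'((t-s)z+s)\,z^{1-\alpha((t-s)z+s)}(1-z)^{\alpha((t-s)z+s)-1}\big(\ln(1-z)-\ln z\big)\,dz,
\end{align*}
and using $|\alpha'|\le\|\alpha\|_{C^1}$, $z^{1-\alpha(\cdot)}\le1$, $(1-z)^{\alpha(\cdot)-1}\le(1-z)^{-(1-\alpha_*)}$ reduces the bound to $Q\int_0^1(1-z)^{-(1-\alpha_*)}(|\ln(1-z)|+|\ln z|)\,dz$, which is finite since $1-\alpha_*\in(0,1)$ and the logarithmic singularities at $z=0$ and $z=1$ are integrable against this weight.

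I expect the one genuinely delicate point to be the rigorous justification of the Leibniz rule and of the differentiation under the $z$-integral for $\mathcal L$, given the logarithmic blow-up of $\mathcal K$ and of the $\mathcal L$-integrand as $s\to t^-$ (respectively $z\to0,1$). This is settled precisely by the uniform estimates derived above, which exhibit the differentiated integrands as dominated by fixed integrable functions, so that differentiation and integration may be interchanged and the boundary terms extracted. Everything else reduces to bookkeeping with the substitution $\tau=s+(t-s)z$ and elementary properties of $\Gamma$ and $B$.
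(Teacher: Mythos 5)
Your proposal is correct and follows essentially the same route as the paper: Fubini plus the substitution $\tau=s+(t-s)z$ to reduce the composition to a single integral with kernel $B(\alpha(t),1-\alpha(s))(t-s)^{\alpha(t)-\alpha(s)}$ (resp.\ the $z$-integral form), Leibniz differentiation with the diagonal limit giving $\gamma(t)g(t)$, and termwise bounding of the differentiated kernels to obtain (\ref{bndK}). Your added care in justifying Fubini and the interchange of $\p_t$ with the integrals via the uniform dominating bounds is a refinement of, not a departure from, the paper's argument.
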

\begin{proof}
By definitions we apply the Beta integral to obtain
\begin{equation}\label{II} \begin{array}{rl}
\ds \mathcal {\hat I}^{1-\alpha(t)}_t \mathcal I^{\alpha(t)}_t g(t)&\ds=\int_0^t\frac{1}{(t-s)^{1-\alpha(t)}}\int_0^s\frac{g(y)}{(s-y)^{\alpha(y)}} dyds\\[0.15in]
&\ds=\int_0^tg(y)\int_y^t\frac{1}{(t-s)^{1-\alpha(t)}}\frac{1}{(s-y)^{\alpha(y)}} dsdy\\[0.15in]
&\ds=\int_0^tB(\alpha(t),1-\alpha(y))(t-y)^{\alpha(t)-\alpha(y)}g(y)dy.
\end{array} \end{equation}
As $\alpha\in C^1[0,T]$ we have
$$\lim_{y\rightarrow t^-}(t-y)^{\alpha(t)-\alpha(y)}=\lim_{y\rightarrow t^-} e^{(\alpha(t)-\alpha(y))\ln(t-y)}=e^0=1. $$
Indeed, a general estimate on this function reads:
\begin{equation*}
(t-y)^{\alpha(t)-\alpha(y)} \text{ has positive upper and lower bounds for }0\leq y\leq t\leq T.
\end{equation*}
 Then we differentiate (\ref{II}) and apply
$$B(\alpha(t),1-\alpha(t))=\Gamma(\alpha(t))\Gamma(1-\alpha(t)) $$
 to obtain
(\ref{rela}). Direct calculations show that
\begin{equation}\label{DK}
\begin{array}{l}
\ds \mathcal K(s,t)=\big(\p_t B(\alpha(t),1-\alpha(s))\big) (t-s)^{\alpha(t)-\alpha(s)}+ B(\alpha(t),1-\alpha(s))\\
\ds\qquad\qquad\times(t-s)^{\alpha(t)-\alpha(s)} \Big(\p_t\alpha(t)\ln(t-s)+\frac{\alpha(t)-\alpha(s)}{t-s}\Big).
\end{array}
\end{equation}
Since $\alpha\in C^1[0,T]$ and is bounded away from $0$ and $1$, the derivative and difference quotient of $\alpha$ as well as $B(\alpha(t),1-\alpha(s))$ and its partial derivatives are bounded, which leads to the first estimate of (\ref{bndK}). 

To prove (\ref{rela2}) and the second estimate of (\ref{bndK}), we employ the transformation $s=(t-y)z+y$ to obtain 
\begin{equation*}
 \begin{array}{rl}
\ds \mathcal I^{1-\alpha(t)}_t \hat{\mathcal I}^{\alpha(t)}_t g(t)&\ds=\int_0^t\frac{1}{(t-s)^{1-\alpha(s)}}\int_0^s\frac{g(y)}{(s-y)^{\alpha(s)}} dyds\\[0.15in]
&\ds=\int_0^tg(y)\int_y^t\frac{1}{(t-s)^{1-\alpha(s)}}\frac{1}{(s-y)^{\alpha(s)}} dsdy\\[0.15in]
&\ds=\int_0^tg(y)\int_0^1 \frac{1}{(1-z)^{1-\alpha((t-y)z+y)}z^{\alpha((t-y)z+y)}} dz dy.
\end{array} 
\end{equation*}
Differentiating this equation yields
\begin{equation}\label{II3} 
\ds \mathcal D^{\alpha(t)}_t \hat{\mathcal I}^{\alpha(t)}_t g(t)\ds=g(t)\int_0^1 \frac{1}{(1-z)^{1-\alpha(t)}z^{\alpha(t)}} dz +\int_0^tg(y)\mathcal L(y,t) dy.
 \end{equation}
The first right-hand side term of (\ref{II3}) is exactly $\Gamma(\alpha(t))\Gamma(1-\alpha(t))g(t)$, and the kernel of the second right-hand side term could be bounded as
$$\begin{array}{rl}
\ds |\mathcal L(y,t)|&\ds=\bigg|\int_0^1 \frac{\p_t\alpha((t-y)z+y)(\ln(1-z)-\ln z)}{(1-z)^{1-\alpha((t-y)z+y)}z^{\alpha((t-y)z+y)}} dz\bigg|\\[0.15in]
&\ds\leq Q\int_0^1\frac{|\ln(1-z)|+|\ln z|}{(1-z)^{1-\alpha_*}z^{\alpha^*}}dz\leq Q,
\end{array} $$
which completes the proof.
\end{proof}
\subsection{Extensions}
Based on the ideas as above, we could find approximate inversions for other commonly-used variable-exponent integral operators, and we list some representative examples in the following. 

\paragraph{Example 1: Variable-exponent R-L fractional integral operators}
Such operators have attracted increasingly attentions in recent years and is defined for $0<\alpha(t)< 1$ \cite{LorHar,Mog,Sun,Tav}
\begin{align}\label{RLI}
 I_t^{\alpha(t)}g(t):=\int_0^t\frac{1}{\Gamma(\alpha(s))}\frac{g(s)}{(t-s)^{1-\alpha(s)}}ds,\\\label{RLhI}
 \ds \hat I_t^{\alpha(t)}g(t):=\int_0^t\frac{1}{\Gamma(\alpha(t))}\frac{g(s)}{(t-s)^{1-\alpha(t)}}ds.
 \end{align}
 
 \begin{remark}
 Indeed, we may set $\alpha(0)=0$ for (\ref{RLI})--(\ref{RLhI}).
In the variable-exponent Abel integral operators (\ref{AbelI}), setting $\alpha(0)=1$ could make the definitions inappropriate. For instance, we evaluate $\lim_{t\rightarrow 0^+}\hat{\mathcal I}_t^{\alpha(t)}1$ as
$$\lim_{t\rightarrow 0^+}\hat{\mathcal I}_t^{\alpha(t)}1=\lim_{t\rightarrow 0^+}\frac{t^{1-\alpha(t)}}{1-\alpha(t)}=\lim_{t\rightarrow 0^+}\frac{e^{(1-\alpha(t))\ln t}}{1-\alpha(t)}=+\infty. $$
 However, (\ref{RLI})--(\ref{RLhI}) do not encounter this issue if $\alpha(0)=0$ by virtue of the additional Gamma function on the denominator. For instance, we again evaluate $\lim_{t\rightarrow 0^+}\hat{I}_t^{\alpha(t)}1$ as
$$\lim_{t\rightarrow 0^+}\hat{I}_t^{\alpha(t)}1=\lim_{t\rightarrow 0^+}\frac{t^{\alpha(t)}}{\Gamma(1+\alpha(t))}=\lim_{t\rightarrow 0^+}\frac{e^{\alpha(t)\ln t}}{\Gamma(1+\alpha(t))}=1. $$
\end{remark}

 The corresponding  approximate inversion operators are the variable-exponent R-L fractional differential operators $\hat D_t^{\alpha(t)}$ and $ D_t^{\alpha(t)}$, respectively, defined by \cite{LorHar,Mog,Sun,Tav}
\begin{align*}
\hat D_t^{\alpha(t)}g(t):=\p_t\hat I_t^{1-\alpha(t)}g(t)=\p_t\bigg( \int_0^t\frac{1}{\Gamma(1-\alpha(t))}\frac{g(s)}{(t-s)^{\alpha(t)}}ds\bigg),\\
 D_t^{\alpha(t)}g(t):=\p_t\hat I_t^{1-\alpha(t)}g(t)=\p_t\bigg( \int_0^t\frac{1}{\Gamma(1-\alpha(s))}\frac{g(s)}{(t-s)^{\alpha(s)}}ds\bigg),
\end{align*}
which, similar to the derivations as (\ref{II})--(\ref{II3}), satisfy
\begin{align}\label{DIfrac} 
\hat D_t^{\alpha(t)} I_t^{\alpha(t)}g(t)&=g(t)+\int_0^t\p_t\frac{(t-y)^{\alpha(y)-\alpha(t)}}{\Gamma(1-\alpha(t)+\alpha(y))}g(y)dy\\\nonumber
D_t^{\alpha(t)} \hat I_t^{\alpha(t)}g(t)&=g(t)+ \int_0^tg(y)\p_t\int_0^1 \frac{(1-z)^{-\alpha((t-y)z+y)}z^{\alpha((t-y)z+y)-1}}{\Gamma(1-\alpha((t-y)z+y))\Gamma(\alpha((t-y)z+y))} dz dy.
\end{align}
In Section \ref{secRL} we will apply these relations to investigate the variable-exponent R-L FDEs. 

\paragraph{Example 2: A general variable-exponent R-L fractional integral operator}
The definition of this operator is given for a $C^1$ function $0<\alpha(\cdot,\cdot)<1$ \cite{LorHar,Sun}
\begin{equation*} I_t^{\alpha(t,\cdot)}g(t):=\int_0^t\frac{1}{\Gamma(\alpha(t,s))}\frac{g(s)}{(t-s)^{1-\alpha(t,s)}}ds.
 \end{equation*}
 The corresponding  approximate inversion operator is \cite{LorHar,Sun}
\begin{align*}
 D_t^{\alpha(\cdot,t)}g(t):=\p_t I_t^{1-\alpha(\cdot,t)}g(t)=\p_t\bigg( \int_0^t\frac{1}{\Gamma(1-\alpha(s,t))}\frac{g(s)}{(t-s)^{\alpha(s,t)}}ds\bigg),
\end{align*}
which, by the substitution $s=(t-y)z+y$, satisfies
\begin{equation*}\begin{array}{rl}
\ds D_t^{\alpha(\cdot,t)} I_t^{\alpha(t,\cdot)}g(t)&\ds=\p_t\int_0^t\frac{(t-s)^{-\alpha(s,t)}}{\Gamma(1-\alpha(s,t))}\int_0^s\frac{(s-y)^{\alpha(s,y)-1}}{\Gamma(\alpha(s,y))} g(y)dyds\\[0.15in]
&\ds\hspace{-0.15in}=\p_t\int_0^tg(y)\int_y^t \frac{(t-s)^{-\alpha(s,t)}(s-y)^{\alpha(s,y)-1}}{\Gamma(1-\alpha(s,t))\Gamma(\alpha(s,y))}  dsdy\\[0.15in]
&\ds\hspace{-0.15in}=\p_t\int_0^tg(y)\mathcal M(y,t)dzdy=g(t)+\int_0^t\p_t\mathcal M(y,t)g(y)dy
\end{array} \end{equation*}
where 
$$\begin{array}{l}
\ds\mathcal M(y,t):=\int_0^1\frac{(t-y)^{\alpha((t-y)z+y,y)-\alpha((t-y)z+y,t)}}{(1-z)^{\alpha((t-y)z+y,t)}z^{1-\alpha((t-y)z+y,y)}}\\[0.15in]
\ds\qquad\qquad\quad\times \frac{1}{\Gamma(1-\alpha((t-y)z+y,t))\Gamma(\alpha((t-y)z+y,y)) }dz.
\end{array}  $$
Though the kernel $\p_t\mathcal M(y,t)$ looks complicated, it is indeed bounded by $Q(1+|\ln(t-y)|)$ as $\mathcal K$ in Theorem \ref{thmII}. 

\paragraph{Example 3: Tempered variable-exponent R-L fractional integral operators} These kind of operators are defined for $\sigma\geq 0$ and $0<\alpha(t)<1$ \cite{Den,Sab}
\begin{align*}
&{}^\sigma I_t^{\alpha(t)}g(t):=\int_0^t\frac{e^{-\sigma(t-s)}}{\Gamma(\alpha(s))}\frac{g(s)}{(t-s)^{1-\alpha(s)}}ds,\\
&{}^\sigma\hat I_t^{\alpha(t)}g(t):=\int_0^t\frac{e^{-\sigma(t-s)}}{\Gamma(\alpha(t))}\frac{g(s)}{(t-s)^{1-\alpha(t)}}ds.
\end{align*}
By virtue of the semigroup property of the exponential function, the corresponding approximate inversion operators could be given by the tempered variable-exponent R-L fractional differential operators ${}^\sigma \hat D_t^{\alpha(t)}$ and ${}^\sigma D_t^{\alpha(t)}$, respectively, defined by \cite{Den,Sab}
\begin{align*}
&{}^\sigma \hat D_t^{\alpha(t)}g(t):=\p_t{}^\sigma\hat I_t^{1-\alpha(t)}g(t)=\p_t\bigg( \int_0^t\frac{e^{-\sigma(t-s)}}{\Gamma(1-\alpha(t))}\frac{g(s)}{(t-s)^{\alpha(t)}}ds\bigg),\\
&{}^\sigma D_t^{\alpha(t)}g(t):=\p_t{}^\sigma I_t^{1-\alpha(t)}g(t)=\p_t\bigg( \int_0^t\frac{e^{-\sigma(t-s)}}{\Gamma(1-\alpha(s))}\frac{g(s)}{(t-s)^{\alpha(s)}}ds\bigg).
\end{align*}
\section{Well-posedness of variable-exponent Abel integral equations}\label{secAbel}
Based on the previous section, we prove the well-posedness of the variable-exponent Abel integral equations with $0<\alpha(t)<1$
\begin{equation}\label{Abel}
\mathcal I^{\alpha(t)}_tu(t)=f(t),~~t\in [0,T].
\end{equation} 
Its analogous problems that replace $\alpha(s)$ in $\mathcal I^{\alpha(t)}_t$ by $\alpha(t)$ or $\alpha(s,t)$ could be analyzed similarly.

By Theorem \ref{II} and the fact that $\gamma(t)$ is bounded and bounded away from $0$, integral equation (\ref{Abel}) could be transformed as a second kind VIE by applying the operator $\hat{\mathcal D}^{\alpha(t)}_t$ on both sides  
\begin{equation}\label{VIE2}
v(t)+\frac{1}{\gamma(t)}\int_0^t\mathcal K(s,t)v(s)ds=\frac{\hat{\mathcal D}^{\alpha(t)}_tf(t)}{\gamma(t)}.
\end{equation}
Here we use $v(t)$ in (\ref{VIE2}) instead of $u(t)$ for the clarity of the notations in the following proof.
\begin{thm}\label{thmabel}
For $\alpha,f\in C^1[0,T]$ with $f(0)\neq 0$, there exists a unique solution $u\in C(0,T]$ to the variable-exponent Abel integral equation (\ref{Abel}) such that $\lim_{t\rightarrow 0^+}t^{1-\alpha(0)}u(t)$ exists, $|u(0)|=\infty$ and
\begin{equation*}
\|t^{1-\alpha(0)}u\|_{C[0,T]}\leq Q\|f\|_{C^1[0,T]}
\end{equation*}
where $Q$  may depend on $\alpha_*$, $\alpha^*$, $\|\alpha\|_{C^1[0,T]}$ and $T$.

In particular, if $f(0)=0$, then $u\in C[0,T]$ with $u(0)=0$ and the stability estimate
\begin{equation}\label{thme2}
\|u\|_{C[0,T]}\leq Q\|f\|_{C^1[0,T]}.
\end{equation}
\end{thm}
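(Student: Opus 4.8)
\ The plan is to solve the reduced second‑kind VIE (\ref{VIE2}), transfer its solvability and stability back to the Abel equation (\ref{Abel}), and obtain uniqueness by reversing the reduction through Theorem \ref{thmII}. The first step is to resolve the structure of the right‑hand side $\hat{\mathcal D}_t^{\alpha(t)}f$. Splitting $f=f(0)+(f-f(0))$ and using $\hat{\mathcal I}_t^{1-\alpha(t)}1=t^{\alpha(t)}/\alpha(t)$, a direct differentiation gives $\hat{\mathcal D}_t^{\alpha(t)}f(0)=f(0)t^{\alpha(t)-1}+f(0)\tfrac{t^{\alpha(t)}}{\alpha(t)}\bigl(\alpha'(t)\ln t-\alpha'(t)/\alpha(t)\bigr)$; writing $\hat{\mathcal I}_t^{1-\alpha(t)}(f-f(0))(t)=\int_0^t\bigl(f(t-\tau)-f(0)\bigr)\tau^{\alpha(t)-1}\,d\tau$ and differentiating under the integral sign---the endpoint contribution at $\tau=0$ vanishes because $f(0)-f(0)=0$---yields, using $|f(t-\tau)-f(0)|\le\tau\|f'\|_{C[0,T]}$, a term of order $t^{\alpha_*}(1+|\ln t|)$. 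Hence $\hat{\mathcal D}_t^{\alpha(t)}f(t)=f(0)t^{\alpha(t)-1}+R(t)$ with $R\in C[0,T]$, $R(0)=0$, and $\|R\|_{C[0,T]}\le Q\|f\|_{C^1[0,T]}$, so the right‑hand side of (\ref{VIE2}) is a singular principal part $\tfrac{f(0)}{\gamma(t)}t^{\alpha(t)-1}$ plus a continuous remainder $R/\gamma$.

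Next I would write $v=v_1+v_2$, where $v_2$ solves (\ref{VIE2}) with right‑hand side $R/\gamma$ and $v_1$ solves it with right‑hand side $\tfrac{f(0)}{\gamma(t)}t^{\alpha(t)-1}$. For $v_2$ the data lies in $C[0,T]$ and vanishes at $t=0$, while the kernel $\mathcal K/\gamma$ is only logarithmically singular by (\ref{bndK}); hence the classical theory of weakly‑singular second‑kind VIEs (convergent Neumann series, Gronwall) yields a unique $v_2\in C[0,T]$ with $v_2(0)=0$ and $\|v_2\|_{C[0,T]}\le Q\|f\|_{C^1[0,T]}$. This already settles the case $f(0)=0$, where $v_1\equiv0$ and $u:=v_2$ satisfies $u(0)=0$ and (\ref{thme2}). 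For $v_1$ I would use the weighted substitution $v_1(t)=t^{\alpha(0)-1}\psi(t)$ and multiply by $t^{1-\alpha(0)}$, which produces a second‑kind VIE for $\psi$ with kernel $\tilde{\mathcal K}(s,t)=\tfrac{t^{1-\alpha(0)}}{\gamma(t)}\mathcal K(s,t)s^{\alpha(0)-1}$ and data $\tfrac{f(0)}{\gamma(t)}t^{\alpha(t)-\alpha(0)}$; the data belongs to $C[0,T]$ with value $f(0)/\gamma(0)=f(0)\sin(\pi\alpha(0))/\pi\ne0$ at $t=0$ (Euler's reflection formula together with $\alpha(0)\in(0,1)$), and the kernel, though it carries the unbounded factor $(t/s)^{1-\alpha(0)}$, satisfies $\int_0^t|\tilde{\mathcal K}(s,t)|\,ds\le Qt(1+|\ln t|)$ by the substitution $s=t\sigma$. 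Crucially, both the factor $s^{\alpha(0)-1}$ at $s=0$ and the diagonal logarithm are reproduced---not amplified---under composition of the associated Volterra operator (the intermediate powers $r^{\alpha(0)-1}r^{1-\alpha(0)}$ cancel), so the Neumann series converges; equivalently the Volterra operator is a contraction on a short initial interval and is continued over $[0,T]$ in finitely many steps. This gives a unique $\psi\in C[0,T]$ with $\psi(0)=f(0)/\gamma(0)$ and $\|\psi\|_{C[0,T]}\le Q\|f\|_{C^1[0,T]}$.

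With $u:=v_1+v_2$ one then has $t^{1-\alpha(0)}u(t)=\psi(t)+t^{1-\alpha(0)}v_2(t)\to f(0)/\gamma(0)\ne0$ as $t\to0^+$, so $\lim_{t\to0^+}t^{1-\alpha(0)}u(t)$ exists, $\|t^{1-\alpha(0)}u\|_{C[0,T]}\le Q\|f\|_{C^1[0,T]}$, and $|u(0)|=\infty$ because $\alpha(0)<1$. To verify that this $u$ solves (\ref{Abel}), apply Theorem \ref{thmII} to $u$: by (\ref{VIE2}) one gets $\hat{\mathcal D}_t^{\alpha(t)}\mathcal I_t^{\alpha(t)}u=\gamma u+\int_0^t\mathcal K(s,t)u(s)\,ds=\hat{\mathcal D}_t^{\alpha(t)}f$, hence $\partial_t\hat{\mathcal I}_t^{1-\alpha(t)}\bigl(\mathcal I_t^{\alpha(t)}u-f\bigr)\equiv0$, so $\hat{\mathcal I}_t^{1-\alpha(t)}\bigl(\mathcal I_t^{\alpha(t)}u-f\bigr)$ is constant; since $\mathcal I_t^{\alpha(t)}u$ is bounded near $t=0$ (a Beta‑type estimate using $u(s)=\mo(s^{\alpha(0)-1})$ and the continuity of $\alpha$ at $0$), this constant is $0$, and applying $\mathcal D_t^{1-\alpha(t)}=\partial_t\mathcal I_t^{\alpha(t)}$ together with the $\alpha\mapsto1-\alpha$ instance of (\ref{rela2}) converts $\hat{\mathcal I}_t^{1-\alpha(t)}\bigl(\mathcal I_t^{\alpha(t)}u-f\bigr)\equiv0$ into a homogeneous second‑kind VIE for $\mathcal I_t^{\alpha(t)}u-f$ with bounded kernel, forcing $\mathcal I_t^{\alpha(t)}u=f$. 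For uniqueness, any solution in the asserted class satisfies (\ref{VIE2}) by Theorem \ref{thmII}, and the difference of two such solutions solves the homogeneous version of (\ref{VIE2}), which the bound $\int_0^t|\tilde{\mathcal K}|\,ds\le Qt(1+|\ln t|)$ and a Gronwall argument force to vanish.

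The main obstacle is the construction of $\psi$: establishing unique solvability and the stability bound for the weighted VIE whose kernel carries simultaneously the $s\to0$ singularity $(t/s)^{1-\alpha(0)}$ and a diagonal logarithmic singularity, a kernel not covered by the most standard weakly‑singular VIE theorems. The decisive points are that $\int_0^t|\tilde{\mathcal K}(s,t)|\,ds$ is not merely bounded but tends to $0$ as $t\to0^+$, and that the two singular features propagate through iteration without amplification; granting these, the argument is routine, but the bookkeeping---and checking that $Q$ depends only on $\alpha_*,\alpha^*,\|\alpha\|_{C^1[0,T]},T$---requires care. A secondary delicate point is the differentiation of $\hat{\mathcal I}_t^{1-\alpha(t)}f$ in the first step, where the variable exponent sits inside the power of the kernel.
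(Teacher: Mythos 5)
Your proposal is correct in substance but takes a genuinely different route from the paper for the central well-posedness step. The paper works on the single weighted space $C_{\alpha(0)}(0,T]$ equipped with the exponentially weighted norm $\|e^{-\lambda t}t^{1-\alpha(0)}v\|_{L^\infty(0,T)}$ and shows that the Volterra operator in (\ref{VIE2}) is a \emph{global} contraction for $\lambda$ large, using the Kummer-function asymptotics (\ref{asym}) to absorb both the logarithmic kernel and the $s^{\alpha(0)-1}$ weight in one estimate; existence, uniqueness and the stability bound then come out of a single fixed-point argument, and the dichotomy $|u(0)|=\infty$ versus $u(0)=0$ is read off afterwards by passing $t\to0^+$ in (\ref{VIE2}) via (\ref{Df}). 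You instead split the data into the singular principal part $f(0)t^{\alpha(t)-1}/\gamma(t)$ plus a continuous remainder, solve two VIEs separately, and for the singular one conjugate by the weight $t^{\alpha(0)-1}$ to obtain a kernel $\tilde{\mathcal K}$ with $\int_0^t|\tilde{\mathcal K}(s,t)|\,ds\le Qt(1+|\ln t|)\to0$, handled by Neumann series (your observation that the powers $r^{\alpha(0)-1}r^{1-\alpha(0)}$ cancel under composition is the correct reason the iteration does not amplify the singularity) or local contraction plus continuation. Your route is longer but yields more: it identifies the limit $\lim_{t\to0^+}t^{1-\alpha(0)}u(t)=f(0)/\gamma(0)$ explicitly, whereas the paper only asserts its existence. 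Your back-substitution step is also more explicit than the paper's: where the paper concludes $\mathcal I^{\alpha(t)}_tv=f$ from $\hat{\mathcal I}^{1-\alpha(t)}_t(\mathcal I^{\alpha(t)}_tv-f)\equiv0$ "by contradiction," you apply the $\alpha\mapsto1-\alpha$ instance of (\ref{rela2}) to reduce to a homogeneous second-kind VIE, which is a cleaner justification. Two harmless slips: the bound should read $|f(t-\tau)-f(0)|\le(t-\tau)\|\p_tf\|_{C[0,T]}$ rather than $\tau\|\p_tf\|_{C[0,T]}$, and the vanishing boundary term in the differentiation arises at the endpoint $\tau=t$ (where the argument of $f-f(0)$ is zero), not at $\tau=0$; neither affects the conclusion that the remainder $R$ is continuous with $R(0)=0$.
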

\begin{proof}
Instead of considering (\ref{Abel}) directly, we first investigate (\ref{VIE2}). Define the space $C_{\alpha(0)}(0,T]:=\{v\in C(0,T]:\lim_{t\rightarrow 0^+}t^{1-\alpha(0)}v(t)\text{ exists}\}$ equipped with the norm $\|v\|_{C_{\alpha(0)}(0,T]}:=\|e^{-\lambda t}t^{1-\alpha(0)}v\|_{C[0,T]}=\|e^{-\lambda t}t^{1-\alpha(0)}v\|_{L^\infty(0,T)}$ for some $\lambda>e\approx 2.718$ and a mapping $\mathcal S$ between this space by 
 \begin{equation}\label{lem11}
\mS v(t):=-\frac{1}{\gamma(t)}\int_0^t\mathcal K(s,t)v(s)ds+\frac{\hat{\mathcal D}^{\alpha(t)}_tf(t)}{\gamma(t)}.
\end{equation}
We need firstly to show that $\mS$ is well defined. Neglecting the $\gamma(t)$ on the denominator, which is bounded away from $0$,  direct calculations show that
\begin{equation}\label{Df} \hat{\mathcal D}^{\alpha(t)}_tf=\p_t\int_0^t\frac{f(t-s)}{s^{1-\alpha(t)}}ds=\frac{f(0)}{t^{1-\alpha(t)}}+\int_0^t\frac{\p_tf(t-s)}{s^{1-\alpha(t)}}+f(t-s)\p_t \bigg(\frac{1}{s^{1-\alpha(t)}}\bigg)ds. \end{equation}
As for any $0<\varepsilon\ll 1$ such that $1-\alpha(t)+\varepsilon<1$, the following estimates hold $$\begin{array}{c}
\ds\bigg|\p_t \bigg(\frac{1}{s^{1-\alpha(t)}}\bigg)\bigg|=\bigg|\frac{\p_t\alpha(t)\ln s}{s^{1-\alpha(t)}}\bigg|\leq Q\frac{\ln s}{s^{1-\alpha(t)}}\leq Q\frac{1}{s^{1-\alpha(t)+\varepsilon}},\\[0.15in]
\ds \frac{t^{1-\alpha(0)}}{t^{1-\alpha(t)}}=t^{\alpha(t)-\alpha(0)}=e^{(\alpha(t)-\alpha(0))\ln(t-0)}\leq Q,
\end{array}  $$
we obtain $\hat{\mathcal D}^{\alpha(t)}_tf\in C_{\alpha(0)}(0,T]$. By (\ref{bndK}) and $v\in C_{\alpha(0)}(0,T]$, the first right-hand side term of (\ref{lem11}) also belongs to $ C_{\alpha(0)}(0,T]$. Therefore, we conclude from (\ref{lem11}) that $\mS v\in C_{\alpha(0)}(0,T]$ and thus $\mS$ is well defined.

To prove the well-posedness of (\ref{lem11}), it remains to show the contractivity of the integral operator in the first right-hand side term of (\ref{lem11}) under the norm $\|\cdot\|_{C_{\alpha(0)}(0,T]}$. We apply the splitting $t^{1-\alpha(0)}=s^{1-\alpha(0)}+(t^{1-\alpha(0)}-s^{1-\alpha(0)})$ to bound this term by
\begin{equation}\label{lem13} \begin{array}{l}
\ds \bigg\|\frac{1}{\gamma(t)}\int_0^t\mathcal K(s,t)v(s)ds\bigg\|_{C_{\alpha(0)}(0,T]}\\
\ds\leq Q\bigg\|\int_0^t|\mathcal K(s,t)e^{-\lambda (t-s)} s^{1-\alpha(0)}e^{-\lambda s}v(s)|ds\\[0.15in]
\ds\quad+\int_0^t\frac{|t^{1-\alpha(0)}-s^{1-\alpha(0)}|}{s^{1-\alpha(0)}}|\mathcal K(s,t)e^{-\lambda (t-s)}s^{1-\alpha(0)}e^{-\lambda s}v(s)|ds\bigg\|_{L^\infty(0,T)}\\
\ds \leq Q\|t^{1-\alpha(0)}e^{-\lambda t}v\|_{L^\infty(0,T)}\bigg\|\int_0^t|\mathcal K(s,t)|e^{-\lambda (t-s)} ds\\[0.15in]
\ds\quad+\int_0^t\frac{(t-s)^{1-\alpha(0)}}{s^{1-\alpha(0)}}|\mathcal K(s,t)|e^{-\lambda (t-s)}ds\bigg\|_{L^\infty(0,T)}
\end{array} \end{equation}
By (\ref{bndK}) and the relations \cite{GraRyz} \begin{equation}\label{asym}\begin{array}{c}
\ds \int_0^t(t-s)^{\mu-1}e^{-\lambda s}ds=B(\mu,1)t^{\mu} {}_1F_1(1;1+\mu,-\lambda t),~~\mu>0,\\
\ds {}_1F_1(1;\mu, x)=\frac{\Gamma(\mu)}{\Gamma(\mu-1)}\frac{1}{x}\bigg(1+\mathcal O\bigg(\frac{1}{x}\bigg)\bigg),~~x\rightarrow-\infty,~~\mu\geq 1,
\end{array} \end{equation}
where ${}_1F_1$ refers to the hypergeometric Kummer function, we have
$$\begin{array}{l}
\ds\bigg|\int_0^t|\mathcal K(s,t)|e^{-\lambda (t-s)} ds\bigg|\leq Q\int_0^t(t-s)^{-\varepsilon}e^{-\lambda (t-s)}ds\\
\ds\qquad\qquad=\frac{Q}{\lambda^{1-\varepsilon}}\int_0^{\lambda t}y^{-\varepsilon}e^{-y}dy\leq\frac{Q}{\lambda^{1-\varepsilon}},~~0<\varepsilon\ll 1,\\[0.15in]
\ds \int_0^t\frac{(t-s)^{1-\alpha(0)}}{s^{1-\alpha(0)}}|\mathcal K(s,t)|e^{-\lambda (t-s)}ds\leq Q\int_0^t\frac{e^{-\lambda (t-s)}}{s^{1-\alpha(0)}}ds\\[0.15in]
\ds\qquad=\frac{QB(\alpha(0),1)}{\lambda^{\alpha(0)}}(\lambda t)^{\alpha(0)}{}_1F_1(1;1+\alpha(0),-\lambda t)\leq \frac{Q}{\lambda^{\alpha(0)}}.
\end{array}  $$
Here we used the asymptotic property of ${}_1F_1$ in (\ref{asym}) to achieve the boundedness of $(\lambda t)^{\alpha(0)}{}_1F_1(1;1+\alpha(0),-\lambda t)$ for large $\lambda t$. We invoke these estimates in (\ref{lem13}) to obtain
\begin{equation*} \begin{array}{l}
\ds \bigg\|\frac{1}{\gamma(t)}\int_0^t\mathcal K(s,t)v(s)ds\bigg\|_{C_{\alpha(0)}(0,T]}\leq  Q\bigg(\frac{1}{\lambda^{1-\varepsilon}}+\frac{1}{\lambda^{\alpha(0)}}\bigg)\|v\|_{C_{\alpha(0)}(0,T]}.
\end{array} \end{equation*}
Then for $\lambda$ large enough, the integral operator in the first right-hand side term of (\ref{lem11}) is a contraction under the norm $\|\cdot\|_{C_{\alpha(0)}(0,T]}$, which implies that there exists a unique solution in $C_{\alpha(0)}(0,T]$ to (\ref{VIE2}) with the stability estimate
\begin{equation*}
\|v\|_{C_{\alpha(0)}(0,T]}\leq Q\|\hat{\mathcal D}^{\alpha(t)}_tf\|_{C_{\alpha(0)}(0,T]}\leq Q\|f\|_{C^1[0,T]}.
\end{equation*}

Now we turn to analyze (\ref{Abel}). We rewrite (\ref{VIE2}) back to its original form as
\begin{equation}\label{DI}
\hat{\mathcal D}^{\alpha(t)}_t \mathcal I^{\alpha(t)}_tv(t)=\hat{\mathcal D}^{\alpha(t)}_tf(t). 
\end{equation}
By (\ref{II}) and $v\in C_{\alpha(0)}(0,T]$, which implies 
$$\lim_{t\rightarrow 0^+}\mathcal {\hat I}^{1-\alpha(t)}_t \mathcal I^{\alpha(t)}_t v(t)=0, $$
we integrate (\ref{DI}) from $0$ to $t$ to get
$$\mathcal {\hat I}^{1-\alpha(t)}_t \big(\mathcal I^{\alpha(t)}_t v(t)-f(t)\big)=0. $$
By the continuity of $\mathcal I^{\alpha(t)}_t v(t)-f(t)$, we could prove by contradiction that $\mathcal I^{\alpha(t)}_t v(t)=f(t)$, that is, $v(t)$ serves as a solution to model (\ref{Abel}). In particular, we pass the limit $t\rightarrow 0^+$ for (\ref{VIE2}) and apply (\ref{Df}) to find that $|v(0)|=\infty$. The uniqueness of the the solutions to (\ref{Abel}) in $C_{\alpha(0)}(0,T]$ follows from that of (\ref{VIE2}).

Finally, we observe from (\ref{Df}) that if $f(0)=0$, then the right-hand side of (\ref{Df}) and thus $\hat{\mathcal D}^{\alpha(t)}_t f$ is continuous on $[0,T]$, and the above derivations could be performed in $C[0,T]$ directly without introducing the weight function $t^{1-\alpha(0)}$. In particular, we pass the limit $t\rightarrow 0^+$ for (\ref{VIE2}) and apply (\ref{Df}) to find that $v(0)=0$. Thus we complete the proof of the theorem.
\end{proof}

\section{Smoothing properties of variable-exponent Abel integral equations}\label{secreg}
In this section we prove the estimate of $\p_tu(t)$ where $u(t)$ is the solution to the variable-exponent Abel integral equation (\ref{Abel}). 
\begin{thm}\label{thmreg}
Suppose $\alpha$, $f\in C^2[0,T]$. Then the solution $u$ to the variable-exponent Abel integral equation (\ref{Abel}) belongs to $C^1(0,T]$ with the estimate
\begin{equation}\label{thmee1}
|\p_tu(t)|\leq Q\|f\|_{C^2[0,T]}t^{\alpha(0)-2},~~t\in (0,T].
\end{equation}
Here $Q$ may depend on $\alpha_*$, $\alpha^*$, $\|\alpha\|_{C^2[0,T]}$ and $T$.

If in addition $f(0)=0$ but $(\p_tf)(0)\neq 0$, $u\in C[0,T]\cap C^1(0,T]$ with the estimate
\begin{equation*}
|\p_tu(t)|\leq Q\|f\|_{C^2[0,T]}t^{\alpha(0)-1},~~t\in (0,T].
\end{equation*}
Otherwise, if $f(0)=(\p_tf)(0)=0$, $u\in  C^1[0,T]$ with the estimate
\begin{equation*}
|\p_tu(t)|\leq Q\|f\|_{C^2[0,T]},~~t\in [0,T].
\end{equation*}
\end{thm}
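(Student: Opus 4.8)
The plan is to differentiate once the second-kind Volterra equation (\ref{VIE2}), which for $u$ reads $\gamma(t)u(t)+\int_0^t\mathcal K(s,t)u(s)\,ds=\hat{\mathcal D}^{\alpha(t)}_t f(t)$. A preliminary remark is that, because $\alpha,f\in C^2[0,T]$, this equation has a forcing that is $C^1$ on $(0,T]$ and a weakly singular kernel with a $C^1$-regular "smooth part", so $u\in C^1(0,T]$ by the standard regularity theory of second-kind Volterra equations; the only real content is therefore the blow-up rate of $\partial_t u$ as $t\to0^+$. I shall use, from Theorem \ref{thmabel}, that $|u(t)|\le Q\|f\|_{C^1[0,T]}t^{\alpha(0)-1}$, and, in the degenerate cases $f(0)=0$, resp. $f(0)=f'(0)=0$, correspondingly sharper bounds on $u$ near $t=0$, obtained by a short bootstrap on (\ref{VIE2}) that exploits the vanishing of $f(0)$, resp. of $f(0)$ and $f'(0)$, in the representation (\ref{Df}). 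Estimating $\partial_t u$ then breaks into two independent tasks: differentiating the right-hand side, and differentiating the weakly singular operator on the left.

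The right-hand side is explicit. Differentiating (\ref{Df}) once more, the only term not bounded by $Q\|f\|_{C^2[0,T]}t^{\alpha(0)-1}(1+|\ln t|)$ is $\partial_t\big(f(0)t^{\alpha(t)-1}\big)$, whose leading part is $f(0)(\alpha(t)-1)t^{\alpha(t)-2}$; every other contribution carries an extra power of the inner integration variable or an extra bounded difference quotient. Since $t^{\alpha(0)-1}(1+|\ln t|)\le Qt^{\alpha(0)-2}$ on $[0,T]$, this gives $|\partial_t\hat{\mathcal D}^{\alpha(t)}_t f(t)|\le Q\|f\|_{C^2[0,T]}t^{\alpha(0)-2}$; if $f(0)=0$ the leading term vanishes and the bound improves to $Q\|f\|_{C^2[0,T]}t^{\alpha(0)-1}$, coming now from the value $f'(0)$ of the integrand in (\ref{Df}) at the upper limit; and if also $f'(0)=0$, the bound is uniform on $[0,T]$. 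These are exactly the three rates in the statement.

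The substantial step is to differentiate $\int_0^t\mathcal K(s,t)u(s)\,ds$. Direct differentiation is not available: with $\mathcal K(s,t)=\partial_t H(s,t)$ and $H(s,t):=B(\alpha(t),1-\alpha(s))(t-s)^{\alpha(t)-\alpha(s)}$ as in (\ref{II}), the kernel $\partial_t\mathcal K=\partial_t^2H$ carries a non-integrable $(t-s)^{-1}$ singularity along the diagonal, and $\mathcal K$ has no diagonal value. I would therefore split $\int_0^t=\int_0^{t/2}+\int_{t/2}^t$. On $[0,t/2]$ the integrand is smooth off the diagonal, and differentiating yields a boundary term $\tfrac12\mathcal K(t/2,t)u(t/2)$ — of size $\le Q\|f\|_{C^1[0,T]}t^{\alpha(0)-1}(1+|\ln t|)\le Q\|f\|_{C^1[0,T]}t^{\alpha(0)-2}$ — together with $\int_0^{t/2}\partial_t\mathcal K(s,t)u(s)\,ds$, whose kernel is $O(1/t)$ there, hence bounded by $Q\|f\|_{C^1[0,T]}t^{-1}\int_0^{t/2}s^{\alpha(0)-1}ds\le Q\|f\|_{C^1[0,T]}t^{\alpha(0)-1}$. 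On $[t/2,t]$ I would first use the commutator identity $\mathcal K=\partial_t H=(\partial_s+\partial_t)H-\partial_s H$: in $(\partial_s+\partial_t)H$ the two genuinely singular terms $\alpha'(t)\ln(t-s)$ and $-\alpha'(s)\ln(t-s)$ combine into the harmless $(\alpha'(t)-\alpha'(s))\ln(t-s)$ and the bounded difference-quotient terms $\pm(\alpha(t)-\alpha(s))/(t-s)$ cancel outright, so $(\partial_s+\partial_t)H$ is continuous up to the diagonal and $\partial_t\big[(\partial_s+\partial_t)H\big]$ is only $O(1+|\ln(t-s)|)$ — this is precisely where $\alpha\in C^2[0,T]$ is essential, via $\alpha''(t)\ln(t-s)$ and $(\alpha'(t)-\alpha'(s))/(t-s)$. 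An integration by parts of the $\partial_s H$ part, legitimate since $u\in C^1[t/2,t]$ with no singular endpoint, converts $\int_{t/2}^t\partial_s H(s,t)u(s)\,ds$ into the harmless diagonal value $\gamma(t)u(t)$, a $t/2$-endpoint term, and $-\int_{t/2}^tH(s,t)\partial_t u(s)\,ds$; the analogous integral over all of $(0,t)$ would diverge because $\partial_t u$ is only $O(s^{\alpha(0)-2})$ near $0$, which is exactly why the split at $t/2$ is needed. Differentiating the resulting representation of $\int_{t/2}^t\mathcal K u$ then produces only diagonal values of the continuous kernels $(\partial_s+\partial_t)H$ and $H$, integral operators with kernels $O(1+|\ln(t-s)|)$ acting on $u$ or on $\partial_t u$, a cancelling pair $\mp\gamma(t)\partial_t u(t)$, and a cancelling pair of $t/2$-endpoint terms $\pm\tfrac12 H(t/2,t)\partial_t u(t/2)$.

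Assembling these computations, differentiating (\ref{VIE2}) leaves a relation of the form
$$\gamma(t)\,\partial_t u(t)+\int_{t/2}^t\mathcal K(s,t)\,\partial_t u(s)\,ds=\mathcal R(t),$$
in which $\mathcal R$ depends only on $u$, on $u(t/2)$, and on $\hat{\mathcal D}^{\alpha(t)}_t f$, $\partial_t\hat{\mathcal D}^{\alpha(t)}_t f$, so that the bounds of the first two paragraphs give $|\mathcal R(t)|\le Q\|f\|_{C^2[0,T]}t^{\alpha(0)-2}$ with $Q$ independent of $t$. Putting $M(t):=\sup_{0<s\le t}s^{2-\alpha(0)}|\partial_t u(s)|$, finite for each $t>0$ by the preliminary $C^1(0,T]$ regularity, and using $|\partial_t u(s)|\le M(t)(t/2)^{\alpha(0)-2}$ on $[t/2,t]$ together with (\ref{bndK}), the remaining integral is bounded by $Q\,M(t)\,t^{\alpha(0)-1}(1+|\ln t|)$; hence $t^{2-\alpha(0)}|\partial_t u(t)|\le Q\|f\|_{C^2[0,T]}+Q\,M(t)\,t^{1-\varepsilon}$ for any fixed small $\varepsilon>0$. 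Taking the supremum over a short initial interval $[0,T_*]$ with $QT_*^{1-\varepsilon}<1/2$ closes the estimate there, and standard Volterra regularity on $[T_*,T]$ — where forcing and kernel are continuous and bounded — extends it, which is (\ref{thmee1}). The two special cases follow identically with the weight $t^{1-\alpha(0)}$ (resp.\ none) dictated by the improved bound on $\mathcal R$, together with $u\in C[0,T]$ from Theorem \ref{thmabel}; in the last case $\partial_t u$ extends continuously to $t=0$ by letting $t\to0^+$ in the displayed relation. The whole difficulty is concentrated in the previous paragraph: realising the commutator cancellation, bookkeeping the moving-endpoint terms and the cancellations among them, and verifying that the $t/2$-split circumvents the non-integrable $\int_0^tH\,\partial_t u$, so that one is finally left with a weakly (logarithmically) singular equation of exactly the type handled in Section \ref{secAbel}.
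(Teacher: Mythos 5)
Your proposal is built on the same central mechanism as the paper's proof --- the decomposition $\mathcal K(s,t)=\p_tH=-\p_sH+(\p_s+\p_t)H$ with $H(s,t)=B(\alpha(t),1-\alpha(s))(t-s)^{\alpha(t)-\alpha(s)}$, in which the $\ln(t-s)$ and difference-quotient singularities cancel so that the remaining kernel is continuous up to the diagonal and has a $\p_t$-derivative of size $O(1+|\ln(t-s)|)$ precisely because $\alpha\in C^2$ --- and your treatment of the forcing via a second differentiation of (\ref{Df}) identifies the same three leading terms ($f(0)t^{\alpha(t)-2}$, then $(\p_tf)(0)t^{\alpha(t)-1}$, then a bounded remainder) that drive the three cases of the theorem. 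Where you genuinely diverge is the final estimation step. The paper multiplies (\ref{VIE2}) by $t$, writes $t=s+(t-s)$, integrates by parts in the $\int\mathcal K(s,t)\,su(s)\,ds$ term over the \emph{whole} interval $(0,t)$ (legitimate because $su(s)\to 0$ as $s\to 0^+$ by Theorem \ref{thmabel}), and ends with a linear Volterra inequality for $|\p_t(tu(t))|$ with integrable forcing $t^{\alpha(0)-1}$, closed by the weakly singular Gronwall inequality; the factor $t$ is then peeled off at the very end. You instead differentiate (\ref{VIE2}) directly, split at $t/2$ to keep the integration by parts away from the origin, and close with a weighted sup-norm absorption. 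Your route avoids the slightly magical multiply-by-$t$ device at the cost of heavier endpoint bookkeeping; the paper's route buys a genuine Gronwall structure on all of $(0,t)$.

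The one step you must repair is the a priori finiteness of $M(t)=\sup_{0<s\le t}s^{2-\alpha(0)}|\p_tu(s)|$. You assert it follows from the preliminary $C^1(0,T]$ regularity, but $C^1(0,T]$ only controls $\p_tu$ on $[\delta,T]$ for each fixed $\delta>0$ and says nothing about the rate of blow-up as $s\to 0^+$; the weight $s^{2-\alpha(0)}$ need not tame it. Your absorption inequality $\phi(t)\le A+Bt^{1-\varepsilon}\sup_{[t/2,t]}\phi$ with $\phi(t)=t^{2-\alpha(0)}|\p_tu(t)|$ only yields $\phi\le 2A$ after taking a supremum down to $0^+$, which presupposes exactly the finiteness being proved; and because the interval $[t/2,t]$ always dips below any fixed $\delta$, you cannot simply run the argument on $[\delta,T]$ with $\delta$-uniform constants either. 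This is fixable --- e.g.\ by first extracting a crude polynomial bound on $\p_tu$ near $0$ from your differentiated relation, or by a continuation argument on the set where $\phi\le 2A$ --- but as written the closing step is circular. Note that the paper's device of working with $w=tu$, which is continuous on $[0,T]$ with $w(0)=0$, and invoking the weakly singular Gronwall inequality of Webb is precisely engineered to sidestep this issue, so if you do not want to patch the absorption argument you could graft your (otherwise sound) kernel analysis onto that framework instead.
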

\begin{proof}
We replace $v(t)$ by $u(t)$ in (\ref{VIE2}), multiply $t$ on both sides of the resulting equation, and split $t=s+(t-s)$ to obtain
\begin{equation}\label{reg1}
tu(t)+\frac{1}{\gamma(t)}\int_0^t\mathcal K(s,t)su(s)ds+\frac{1}{\gamma(t)}\int_0^t\mathcal K(s,t)(t-s)u(s)ds=\frac{t\hat{\mathcal D}^{\alpha(t)}_tf(t)}{\gamma(t)}.
\end{equation}
By (\ref{DK}) and
$$\begin{array}{l}
\ds\p_s \big(B(\alpha(t),1-\alpha(s))(t-s)^{\alpha(t)-\alpha(s)}\big)=\big(\p_s B(\alpha(t),1-\alpha(s))\big)(t-s)^{\alpha(t)-\alpha(s)}\\
\ds\quad+B(\alpha(t),1-\alpha(s))(t-s)^{\alpha(t)-\alpha(s)}\bigg(-\p_s\alpha(s)\ln(t-s)-\frac{\alpha(t)-\alpha(s)}{t-s}\bigg),
\end{array} $$
we obtain
$$\begin{array}{l}
\ds\mathcal K(s,t)=-\p_s \big(B(\alpha(t),1-\alpha(s))(t-s)^{\alpha(t)-\alpha(s)}\big)\\[0.05in]
\ds\qquad\qquad+\big((\p_t+\p_s) B(\alpha(t),1-\alpha(s))\big) (t-s)^{\alpha(t)-\alpha(s)}\\[0.05in]
\ds\qquad\qquad +B(\alpha(t),1-\alpha(s))(t-s)^{\alpha(t)-\alpha(s)} (\p_t\alpha(t)-\p_s\alpha(s))\ln(t-s)\\[0.05in]
\ds\qquad\quad\,=:-\p_s \big(B(\alpha(t),1-\alpha(s))(t-s)^{\alpha(t)-\alpha(s)}\big)+\mathfrak{K}(s,t),
\end{array}  $$
and we could apply $\alpha\in C^2[0,T]$ to find 
\begin{equation*}
\lim_{s\rightarrow t^-}\mathfrak{K}(s,t)=0,~~|\p_t\mathfrak{K}(s,t)|\leq Q(1+|\ln(t-s)|).
\end{equation*}
  We invoke these in the second left-hand side term of (\ref{reg1}) and apply integration by parts to obtain
\begin{equation}\label{rege25}
\begin{array}{l}
\ds \frac{1}{\gamma(t)}\int_0^t\mathcal K(s,t)su(s)ds\\
\ds\qquad=-tu(t)+\frac{1}{\gamma(t)}\int_0^tB(\alpha(t),1-\alpha(s))(t-s)^{\alpha(t)-\alpha(s)}\p_s(su(s))ds\\[0.15in]
\ds\qquad\qquad+\int_0^t\frac{\mathfrak{K}(s,t)}{\gamma(t)}su(s)ds.
\end{array}
\end{equation}
Then we differentiate this equation to obtain
\begin{equation*}
\begin{array}{l}
\ds \p_t\bigg(\frac{1}{\gamma(t)}\int_0^t\mathcal K(s,t)su(s)ds\bigg)\\[0.15in]
\ds\qquad=\int_0^t \p_t\bigg(\frac{1}{\gamma(t)}B(\alpha(t),1-\alpha(s))(t-s)^{\alpha(t)-\alpha(s)}\bigg)\p_s(su(s))ds\\[0.15in]
\ds\qquad\qquad+\int_0^t\p_t\bigg(\frac{\mathfrak{K}(s,t)}{\gamma(t)}\bigg)su(s)ds.
\end{array}
\end{equation*}
We use this relation to differentiate (\ref{reg1}) as follows
\begin{equation}\label{reg2}
\begin{array}{l}
\ds \hspace{-0.15in}\p_t(tu(t))+\int_0^t \p_t\bigg(\frac{1}{\gamma(t)}B(\alpha(t),1-\alpha(s))(t-s)^{\alpha(t)-\alpha(s)}\bigg)\p_s(su(s))ds\\[0.15in]
\ds\hspace{-0.1in}+\int_0^t\p_t\bigg(\frac{\mathfrak{K}(s,t)}{\gamma(t)}\bigg)su(s)ds+\int_0^t\p_t\bigg(\frac{\mathcal K(s,t)(t-s)}{\gamma(t)}\bigg)u(s)ds=\p_t\bigg(\frac{t\hat{\mathcal D}^{\alpha(t)}_tf(t)}{\gamma(t)}\bigg).
\end{array}
\end{equation}
By (\ref{DK}) we have
$$\bigg|\p_t\bigg(\frac{\mathcal K(s,t)(t-s)}{\gamma(t)}\bigg)\bigg|\leq Q(1+\ln(t-s)).$$
Thus, we apply Theorem \ref{thmabel} to bound the third and the forth terms on the left-hand side of (\ref{reg2}) by $Q\|f\|_{C^1[0,T]}$. We calculate $t\hat{\mathcal D}^{\alpha(t)}_tf(t)$ by (\ref{Df}) as
\begin{equation*}
 t\hat{\mathcal D}^{\alpha(t)}_tf=f(0)t^{\alpha(t)}+t\int_0^t\frac{\p_tf(t-s)}{s^{1-\alpha(t)}}+f(t-s)\p_t \bigg(\frac{1}{s^{1-\alpha(t)}}\bigg)ds,
\end{equation*}
the derivative of which could be bounded as
$\p_t(t\hat{\mathcal D}^{\alpha(t)}_tf(t))\leq Q\|f\|_{C^2[0,T]}t^{\alpha(0)-1}.$ We incorporate these estimates in (\ref{reg2}) to obtain for some $0<\varepsilon<1$
$$\begin{array}{rl}
\ds|\p_t(tu(t))|&\ds\leq Q\int_0^t(1+|\ln(t-s)|)|\p_s(su(s))|ds+Q\|f\|_{C^2[0,T]}t^{\alpha(0)-1}\\[0.15in]
&\ds\leq Q\int_0^t\frac{|\p_s(su(s))|}{(t-s)^\varepsilon}ds+Q\|f\|_{C^2[0,T]}t^{\alpha(0)-1}.
\end{array}  $$
Then an application of the weak singular Gronwall inequality, see e.g., \cite[Theorem 1.2]{Web}, yields
$$|\p_t(tu(t))|\leq Q\|f\|_{C^2[0,T]}t^{\alpha(0)-1}.
 $$
 As $\p_t(tu(t))=u(t)+t\p_tu(t)$, we immediately obtain from Theorem \ref{thmabel} that 
 $$|t\p_tu(t)|\leq Q\|f\|_{C^2[0,T]}t^{\alpha(0)-1},$$
  which proves (\ref{thmee1}). 
  
  If $f(0)=0$, then Theorem \ref{thmabel} implies that model (\ref{Abel}) admits a unique continuous solution on $[0,T]$ satisfying $u(0)=0$ and (\ref{thme2}). Thus we could directly differentiate (\ref{VIE2}) and perform estimates by similar techniques as above without multiplying $t$ on both sides. In particular, differentiating the second right-hand side term of (\ref{Df}) yields
  $$\p_t\int_0^t\frac{\p_tf(t-s)}{s^{1-\alpha(t)}}ds=\frac{(\p_tf)(0)}{t^{1-\alpha(t)}}+ \int_0^t\p_t\bigg(\frac{\p_tf(t-s)}{s^{1-\alpha(t)}}\bigg)ds.$$
If $(\p_tf)(0)\neq 0$, then $\p_tu$ has the singularity of $t^{\alpha(t)-1}\leq Qt^{\alpha(0)-1}$. Otherwise, $\p_t u$ is continuous on $[0,T]$, which completes the proof.
\end{proof}

\section{Analysis of R-L fractional differential equations}\label{secRL}
Classical R-L FDE of order $0<\alpha<1$ reads \cite{DieFor,KilSri}
\begin{equation}\label{RLFDE0}
D^{\alpha}_tu(t)=h(t),~~t\in (0,T];~~u(0)=u_0.
\end{equation}
the solution of which could be analytically obtained as
\begin{equation}\label{uh}
 u(t)=I^{\alpha}_th(t).
 \end{equation}
Note that if $h\in C[0,T]$, taking $t\rightarrow 0^+$ on both sides of this equation leads to $u(0)=0$. Thus, $u_0$ must be $0$ in order to ensure the well-posedness of (\ref{RLFDE0}), which is restricted and extremely sensitive to the initial value, that is, any perturbation or noise on the initial value may lead to the ill-posedness of model (\ref{RLFDE0}). In most literature, the equation in (\ref{RLFDE0}) is usually equipped with the fractional initial condition, which may be difficult to determine in real applications.

Another observation is that if we differentiate (\ref{uh}) we obtain 
$$\p_tu(t)=\p_t I^{\alpha}_th(t)=\frac{t^{\alpha-1}}{\Gamma(\alpha)}h(0)+I^{\alpha}_t \p_t  h(t),$$
which indicates that the solution has a singular derivative if $h(0)\neq 0$. We will show that in variable exponent problems, this singularity could be removed by adjusting the variable exponent.
 
In this section we address these issues to study the following variable-exponent R-L FDE 
\begin{equation}\label{RLFDE}
D^{\alpha(t)}_tu(t)=h(t),~~t\in (0,T];~~u(0)=u_0.
\end{equation}
The analysis for its analogues problems, which replace $I^{1-\alpha(t)}_t$ in (\ref{RLFDE}) by $\hat I^{1-\alpha(t)}_t$ or $I^{1-\alpha(t,\cdot)}_t$,
could be carried out similarly. We discuss this model for two cases: 
\begin{itemize}
\item[(\textbf{i})] $0<\alpha(t)<1$ on $[0,T]$;

\item[(\textbf{ii})] $0<\alpha(t)<1$ on $(0,T]$ and $\alpha(0)=1$.
\end{itemize}
We will draw conclusions that under the condition (\textbf{i}), (\ref{RLFDE}) encounters the same sensitivity issue on the initial condition and the solution has a singular derivative as its constant-exponent analogue (\ref{RLFDE0}), while under the condition (\textbf{ii}), model (\ref{RLFDE}) is well-posed for any $u_0\in\mathbb R$ and, if in addition $(\p_t\alpha)(0)=0$, the solution has a continuous derivative, which resolves the sensitivity issue of the well-posedness of R-L FDEs on the initial conditions and the initial singularities of the solutions to FDEs.

\subsection{Model (\ref{RLFDE}) with condition (\textbf{i})}
We aim to seek its solution in $C[0,T]$ and formally integrate (\ref{RLFDE}) from $0$ to $t$ to obtain 
\begin{equation}\label{vie1}
I^{1-\alpha(t)}_tu(t)=\int_0^th(s)ds+c_0.
\end{equation}
Here $c_0$ is a constant that will be determined later.
As $0<\alpha(t)<1$ on $[0,T]$, the results in Sections \ref{secAbel}--\ref{secreg} could be employed since models (\ref{vie1}) and (\ref{Abel}) are almost equivalent (except for an additional Gamma function in $I^{1-\alpha(t)}_t$ that does not affect the analysis) if 
$$f(t)=\int_0^th(s)ds+c_0.$$
If $c_0\neq 0$, then $f(0)\neq 0$. By Theorem \ref{thmabel}, the solution $u$ has the property $|u(0)|=\infty$, which could not satisfy the initial condition in (\ref{RLFDE}). Therefore, we must set $c_0=0$ and then Theorem \ref{thmabel} implies that there exists a unique continuous solution to (\ref{vie1}) with $u(0)=0$, that is, $u_0$ must be $0$ in order that model (\ref{RLFDE}) is well-posed, which encounters the same sensitivity issue on the initial condition as its constant-exponent analogue (\ref{RLFDE0}). In this case ($u_0=0$), we could apply Theorem \ref{thmreg} to conclude that if $h(0)\neq 0$, then the solution $u$ to (\ref{vie1}) and thus (\ref{RLFDE}) has a initial singularity of $\mathcal O(t^{\alpha(0)-1})$ as the constant-exponent case.

\subsection{Model (\ref{RLFDE}) with condition (\textbf{ii})}
By condition (\textbf{ii}), $\alpha(t)$ still has a positive lower bound $\alpha_*$ but is no longer bounded away from $1$. We apply $\hat D_t^{1-\alpha(t)}$ on both sides of (\ref{vie1}) and employ (\ref{DIfrac}) to obtain
\begin{equation}\label{vie4}
u(t)+\int_0^t\p_t\frac{(t-y)^{\alpha(t)-\alpha(y)}}{\Gamma(1+\alpha(t)-\alpha(y))}u(y)dy=\hat D^{1-\alpha(t)}_t\bigg(\int_0^th(s)ds+c_0\bigg).
\end{equation}
 We base on this equation to analyze the well-posedness and smoothing properties of variable-exponent R-L FDE (\ref{RLFDE}) in the following theorem.
\begin{thm}
Suppose $\alpha\in C^1[0,T]$, $h\in C[0,T]$
and the condition (\textbf{ii}) holds. Then the variable-exponent R-L FDE (\ref{RLFDE}) admits a unique solution in $C[0,T]$ for any $u_0\in\mathbb R$ with the stability estimate
$$\|u\|_{C[0,T]}\leq Q\big(\|h\|_{C[0,T]}+|u_0|\big).$$
Here $Q$ may depend on $\alpha_*$, $\|\alpha\|_{C^1[0,T]}$ and $T$.

If $\alpha\in C^2[0,T]$ with $(\p_t\alpha)(0)=0$, $h\in C^1[0,T]$ and the condition (\textbf{ii}) holds, then $u\in C^1[0,T]$ with the stability estimate
$$\|u\|_{C^1[0,T]}\leq Q\big(\|h\|_{C^1[0,T]}+|u_0|\big).$$
Here $Q$ may depend on $\alpha_*$, $\|\alpha\|_{C^2[0,T]}$ and $T$.
  \end{thm}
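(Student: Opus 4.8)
The plan is to analyze the second-kind VIE (\ref{vie4}), to which (\ref{RLFDE}) is equivalent once $c_0$ is fixed, and to run a weighted fixed-point argument analogous to Theorem \ref{thmabel}, but now in the \emph{unweighted} space $C[0,T]$ because the condition $\alpha(0)=1$ removes the $t^{1-\alpha(0)}=t^0$ weight. First I would compute $\hat D^{1-\alpha(t)}_t(\int_0^t h(s)ds+c_0)$ by the formula (\ref{Df}) (applied with $1-\alpha(t)$ in place of $1-\alpha(t)$, i.e.\ exponent $\alpha(t)$ there becomes $1-\alpha(t)$, so the singular prefactor is $t^{-(1-(1-\alpha(t)))}/\Gamma(\alpha(t))=t^{\alpha(t)-1}/\Gamma(\alpha(t))$ times the value of $\int_0^s h+c_0$ at $s=0$, which is $c_0$). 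Since $\alpha(0)=1$, the exponent $\alpha(t)-1\to 0$ as $t\to 0^+$, so the prefactor $t^{\alpha(t)-1}$ stays bounded (indeed $\to 1$); this is precisely why any $c_0\in\mathbb R$ is now admissible, and one then reads off $u(0)=c_0/\Gamma(\alpha(0))=c_0$, forcing $c_0=u_0$. With $c_0=u_0$ the right-hand side of (\ref{vie4}) is continuous on $[0,T]$ and bounded by $Q(\|h\|_{C[0,T]}+|u_0|)$.

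Next I would establish that the integral operator $v\mapsto \int_0^t \p_t\big[(t-y)^{\alpha(t)-\alpha(y)}/\Gamma(1+\alpha(t)-\alpha(y))\big]v(y)\,dy$ is a contraction on $(C[0,T],\|e^{-\lambda t}\cdot\|_{C[0,T]})$ for $\lambda$ large. The kernel here is exactly the one appearing in (\ref{DIfrac}); by the same computation as (\ref{DK}) one has $\p_t\big[(t-y)^{\alpha(t)-\alpha(y)}/\Gamma(1+\alpha(t)-\alpha(y))\big]$ bounded by $Q(1+|\ln(t-y)|)\le Q(t-y)^{-\varepsilon}$ for any small $\varepsilon>0$, uniformly on $0\le y\le t\le T$ — crucially this needs only $\alpha_*>0$ and $\alpha\in C^1$, not a bound away from $1$. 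Then $\big\|\int_0^t Q(t-y)^{-\varepsilon}e^{-\lambda(t-y)}\,dy\big\|_{L^\infty(0,T)}\le Q\lambda^{\varepsilon-1}$ by the substitution $y\mapsto \lambda(t-y)$, which is $<1$ for $\lambda$ large; the Banach fixed-point theorem then gives a unique $u\in C[0,T]$ with $\|u\|_{C[0,T]}\le Q(\|h\|_{C[0,T]}+|u_0|)$. Finally, reversing the steps (integrating $\hat D^{\alpha(t)}_t$, using that $\hat I^{1-\alpha(t)}_t$ of a continuous function vanishes at $t=0$, and a continuity/contradiction argument as in the proof of Theorem \ref{thmabel}) shows this $u$ solves (\ref{RLFDE}) with $u(0)=u_0$, and uniqueness transfers back from (\ref{vie4}).

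For the second (smoothing) assertion I would differentiate (\ref{vie4}) directly — no multiplication by $t$ is needed since $u$ is already continuous up to $0$. The right-hand side becomes, via (\ref{Df}) with $h$ replaced by $\int_0^\cdot h+u_0$ and exponent $1-\alpha(t)$: a term $\p_t\big(u_0 t^{\alpha(t)-1}/\Gamma(\alpha(t))\big)$ plus smoother pieces controlled by $\|h\|_{C^1}$. Here $\p_t\big(t^{\alpha(t)-1}\big)=t^{\alpha(t)-1}\big((\p_t\alpha)(t)\ln t+(\alpha(t)-1)/t\big)$; with $\alpha(0)=1$ the second summand is $t^{\alpha(t)-1}(\alpha(t)-1)/t=t^{\alpha(t)-2}(\alpha(t)-1)$, and since $\alpha(t)-1=O(t)$ (as $\alpha\in C^2$) and $(\p_t\alpha)(0)=0$ gives $(\p_t\alpha)(t)=O(t)$, so $(\p_t\alpha)(t)\ln t\to 0$ and $t^{\alpha(t)-2}(\alpha(t)-1)=O(t^{\alpha(t)-1})\to 0$ as $t\to 0^+$; hence the whole right-hand side of the differentiated equation is continuous on $[0,T]$ and bounded by $Q(\|h\|_{C^1}+|u_0|)$. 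Differentiating the integral term produces, after the same integration-by-parts splitting of the kernel used in the proof of Theorem \ref{thmreg} (writing the kernel derivative as an $\p_s$-exact part plus a remainder bounded by $Q(1+|\ln(t-y)|)$, needing $\alpha\in C^2$), an equation of the form $\p_t u(t)=(\text{bounded data})+\int_0^t O((t-y)^{-\varepsilon})\p_y u(y)\,dy+\int_0^t O((t-y)^{-\varepsilon})u(y)\,dy$; the weakly singular Gronwall inequality (\cite[Theorem 1.2]{Web}) then yields $\|\p_t u\|_{C[0,T]}\le Q(\|h\|_{C^1[0,T]}+|u_0|)$, completing the bound on $\|u\|_{C^1[0,T]}$.

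The main obstacle I anticipate is bookkeeping the boundary behaviour at $t=0$ of the differentiated right-hand side: one must verify carefully that the combination $(\p_t\alpha)(0)=0$ together with $\alpha(0)=1$ kills \emph{both} the $\ln t$ singularity (from differentiating the exponent) and the $t^{-1}$ singularity (from the $\Gamma$-factor and the $(\alpha(t)-1)/t$ term), and that no residual $t^{\alpha(0)-2}=t^{-1}$ blow-up survives — this is the exact analogue of how $f(0)=(\p_t f)(0)=0$ was needed in Theorem \ref{thmreg}, but here it is the structure of $\alpha$ near $0$, not of the data, that provides the cancellation. Everything else is a routine adaptation of the machinery already built in Sections \ref{secAbel}--\ref{secreg}.
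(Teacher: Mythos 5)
Your proposal is correct and follows essentially the same route as the paper: reduce to the second-kind VIE (\ref{vie4}), compute the right-hand side explicitly as in (\ref{dh})--(\ref{dc0}) to read off $u(0)=c_0$ and hence admit arbitrary $u_0$, run the exponentially weighted contraction, recover (\ref{RLFDE}) by reversing the steps, and obtain the $C^1$ bound via the integration-by-parts splitting of the kernel plus the weakly singular Gronwall inequality. The only local difference is in the $C^1$ step: the paper disposes of the critical term $u_0\,\p_t\big(t^{\alpha(t)-1}/\Gamma(\alpha(t))\big)$ by noting that the boundary term generated by integration by parts on the left-hand side cancels it exactly, whereas you bound it directly by checking that $\alpha(0)=1$, $(\p_t\alpha)(0)=0$ and $\alpha\in C^2$ force both $(\p_t\alpha)(t)\ln t$ and $(\alpha(t)-1)t^{\alpha(t)-2}$ to vanish as $t\to 0^+$ --- both arguments are valid.
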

\begin{proof}
We apply the transformation $s\rightarrow t-s$ to obtain
\begin{equation}\label{dh} \begin{array}{l}
\ds \hat D_t^{1-\alpha(t)}\int_0^th(s)ds=\p_t \hat I^{\alpha(t)}_t\int_0^th(s)ds=\p_t\int_0^t\frac{s^{\alpha(t)-1}}{\Gamma(\alpha(t))}\int_0^{t-s}h(y)dyds\\[0.15in]
\ds\qquad=\int_0^t\p_t\bigg(\frac{s^{\alpha(t)-1}}{\Gamma(\alpha(t))}\bigg)\int_0^{t-s}h(y)dyds+\int_0^t\frac{s^{\alpha(t)-1}}{\Gamma(\alpha(t))}h(t-s)ds.
\end{array} \end{equation}
As $\alpha(t)\geq \alpha_*>0 $, we have
$$\bigg|\p_t\bigg(\frac{s^{\alpha(t)-1}}{\Gamma(\alpha(t))}\bigg)\bigg|=\bigg|\frac{s^{\alpha(t)-1}}{\Gamma(\alpha(t))}\bigg(\p_t\alpha(t)\ln s-\frac{\p_t\Gamma(\alpha(t))}{\Gamma(\alpha(t))}\bigg)\bigg|\leq Qs^{\alpha_*-1}|\ln s|. $$
Therefore, the right-hand side of (\ref{dh}) is continuous on $[0,T]$. We could also evaluate $\hat D_t^{1-\alpha(t)}c_0$ exactly as follows
\begin{equation}\label{dc0}
\begin{array}{rl}
\ds \hat D_t^{1-\alpha(t)}c_0&\ds=c_0\p_t\int_0^t\frac{1}{\Gamma(\alpha(t))}\frac{1}{(t-s)^{1-\alpha(t)}}ds=c_0\p_t\bigg(\frac{t^{\alpha(t)}}{\Gamma(1+\alpha(t))}\bigg)\\[0.15in]
&\ds = c_0\bigg(\frac{t^{\alpha(t)-1}}{\Gamma(\alpha(t))}+\frac{(\p_t\alpha(t)) t^{\alpha(t)}\ln t}{\Gamma(1+\alpha(t))}+t^{\alpha(t)}\p_t\bigg(\frac{1}{\Gamma(1+\alpha(t))}\bigg)\bigg).
\end{array}
\end{equation}
As $\alpha(0)=1$ and $\alpha\in C^1[0,T]$, $\lim_{t\rightarrow 0^+}t^{\alpha(t)-1}=\lim_{t\rightarrow 0^+}e^{(\alpha(t)-\alpha(0))\ln t}=1$. Similar to the estimate of $\mathcal K(s,t)$, we have $|\p_t(t-y)^{\alpha(t)-\alpha(y)}|\leq Q(1+|\ln (t-y)|)$. Then by similar proofs as Theorem \ref{thmabel}, integral equation (\ref{vie4}) admits a unique solution in $C[0,T]$ with stability estimate $\|u\|_{C[0,T]}\leq Q\big(\|h\|_{C[0,T]}+c_0\big)$. Passing the limit $t\rightarrow 0^+$ on both sides of (\ref{vie4}) and using the expressions (\ref{dh}) and (\ref{dc0}) we obtain
\begin{equation*}
u(0)=c_0.
\end{equation*}
Therefore, if we set $c_0=u_0$, then the solution $u$ to model (\ref{vie4}) satisfies $u(0)=u_0$.

Finally we need to recover the differential equation (\ref{RLFDE}) from the integral equation (\ref{vie4}), the original form of which is
\begin{equation*}
\p_t \hat I^{\alpha(t)}_t\bigg( I^{1-\alpha(t)}_tu(t)-\int_0^th(s)ds-c_0\bigg)=0.
\end{equation*}
As the content in $(\cdots)$ is continuous, we integrate this equation from $0$ to $t$ to obtain
\begin{equation*}
 \hat I^{\alpha(t)}_t\bigg( I^{1-\alpha(t)}_tu(t)-\int_0^th(s)ds-c_0\bigg)=0,
\end{equation*}
which, by contraction, immediately leads to
\begin{equation*}
  I^{1-\alpha(t)}_tu(t)-\int_0^th(s)ds-c_0=0.
\end{equation*}
As the second and the third left-hand side terms of this equation are differentiable, so does the first term. Thus we differentiate this equation to obtain (\ref{RLFDE}), that is, the solution to the integral equation (\ref{vie4}) also solves (\ref{RLFDE}). The uniqueness of the solutions to (\ref{RLFDE}) in $C[0,T]$ follows from that of (\ref{vie4}).

To bound $\p_tu(t)$, we may differentiate (\ref{vie4}) and then perform estimates following the proof of Theorem \ref{thmreg}. In particular, when we apply the integration by parts for the second left-hand side term of (\ref{vie4}) as (\ref{rege25}), the same term as the first right-hand side term of (\ref{dc0}) with $c_0=u_0$ will be generated, and they cancel each other.
If we differentiate the numerator of the second right-hand side term of (\ref{dc0}), we will encounter the factor $$(\p_t\alpha(t))\alpha(t)t^{\alpha(t)-1}\ln t.$$
In order to make this term continuous, we require $\alpha(0)=1$, $(\p_t\alpha)(0)=0$ and and $\alpha\in C^2[0,T]$ as assumed in the theorem.
 The rest of the proof is omitted as it is similar to that of Theorem \ref{thmreg} and thus we complete the proof.
\end{proof}

\section{Concluding remarks}
We develop an approximate inversion technique of variable-exponent Abel integral operators, and analyze the corresponding integral and differential equations. In particular, we prove that the sensitive dependence of the well-posedness of classical R-L FDEs on the initial value and the singularity of the solutions could be resolved by adjusting the initial value of the variable exponent, which demonstrates its advantages. The proposed approximate inversion technique provides a potential means to convert the intricate variable-exponent integral and differential problems to feasible forms and the derived results suggest that the variable-exponent fractional problems may serve as a connection between integer-order and fractional models by adjusting the variable exponent at the initial time.

There are other potential applications of this approximate inversion method. For instance, since the variable-exponent Abel integral equations and corresponding fractional differential equations have been converted as second-kind VIEs, traditional treatments for the second-kind VIEs \cite{Bru} could be employed to prove the high-order regularity of the solutions and to develop and analyze numerical schemes. We will investigate these topics in the near future.

\section*{Acknowledgements}
This work was partially funded by the  International Postdoctoral Exchange Fellowship Program (Talent-Introduction Program) YJ20210019 and by the China Postdoctoral Science Foundation 2021TQ0017.


\end{document}